\documentclass[12pt]{amsart}

\usepackage{amssymb,latexsym}
\usepackage[enableskew]{youngtab}
\usepackage{verbatim,euscript}
\usepackage{fullpage}
\usepackage[all]{xy}

\usepackage[colorlinks=true,linkcolor=blue,urlcolor=violet,citecolor=magenta]{hyperref}

\input {cyracc.def}
\numberwithin{equation}{section}

\tolerance=4000



\newtheorem{thm}{Theorem}[section]

\theoremstyle{remark}
\newtheorem{rmk}[thm]{Remark}
\newtheorem*{exa}{Example}

\theoremstyle{definition}
\newtheorem{ex}[thm]{Example}

\newenvironment{proof*}
{\noindent {\sl Proof.}\quad }{\hfill $\square$}


\newcommand {\ah}{{\mathfrak a}}
\newcommand {\be}{{\mathfrak b}}

\newcommand {\g}{{\mathfrak g}}

\newcommand {\n}{{\mathfrak n}}

\newcommand {\p}{{\mathfrak p}}

\newcommand {\ut}{{\mathfrak u}}

\newcommand {\slno}{{\mathfrak{sl}}_{n+1}}

\newcommand {\spn}{{\mathfrak{sp}}_{2n}}


\newcommand {\esi}{\varepsilon}
\newcommand {\ap}{\alpha}

\newcommand {\lb}{\lambda}


\newcommand {\HW}{\widehat W}
\newcommand {\HV}{\widehat V}
\newcommand {\HP}{\widehat\Pi}
\newcommand {\HD}{\widehat\Delta}


\newcommand {\ck}{{\mathcal K}}

\newcommand {\cku}{\hat{\mathcal K}}



\newcommand {\rk}{{\mathsf{rk\,}}}

\newcommand {\GR}[2]{{\textrm{{\bf #1}}}_{#2}}

\newcommand {\un}{\underline}

\newcommand {\Ab}{\mathfrak{Ab}}

\newcommand {\beq}{\begin{equation}}
\newcommand {\eeq}{\end{equation}}

\newcommand{\curle}{\preccurlyeq}
\renewcommand{\le}{\leqslant}
\renewcommand{\ge}{\geqslant}

\newcommand{\adn}{{\sf ad}-nilpotent }

\newcommand{\eus}{\EuScript}

\newfam\Bbbfam\newfam\eufam\newfam\eusfam%
\font\Bbbfont=msbm10 scaled 1200%
\font\Bbbsmallfont=msbm8%
\textfont\Bbbfam=\Bbbfont\scriptfont\Bbbfam=\Bbbsmallfont

\begin{document}
\setlength{\parskip}{2pt plus 4pt minus 0pt}
\hfill {\scriptsize August 11, 2010} 
\vskip1.5ex

\title[Abelian ideals and Dynkin diagrams]{Abelian ideals of a Borel subalgebra and
subsets of the Dynkin diagram}
\author{Dmitri I. Panyushev}
\address[]{Independent University of Moscow,
Bol'shoi Vlasevskii per. 11, 119002 Moscow, \ Russia
\hfil\break\indent
Institute for Information Transmission Problems, B. Karetnyi per. 19, Moscow 
127994}
\email{panyush@mccme.ru}
\keywords{Root system, minuscule element, covering polynomial, graded poset}
\subjclass[2010]{17B20, 17B22, 20F55}
\begin{abstract}

Let $\g$ be a simple Lie algebra  
and  $\Ab(\g)$  the set of Abelian ideals of a Borel subalgebra of $\g$.
In this note,  an interesting connection between $\Ab(\g)$ and the subsets of the 
Dynkin diagram of $\g$ is discussed. We notice that
the number of abelian ideals with $k$ generators equals the number of 
subsets of the Dynkin diagram with $k$ connected components.
For $\g$ of type $\GR{A}{n}$ or  $\GR{C}{n}$, we provide a combinatorial explanation of this coincidence by constructing a suitable bijection.
We also construct a general bijection between $\Ab(\g)$ and the 
subsets of the Dynkin diagram, which is based on the  theory
developed by Peterson and Kostant.
\end{abstract}
\maketitle

\section*{Introduction}

\noindent
Let $\g$ be a complex simple Lie algebra with a Borel subalgebra $\be$.
The set of abelian ideals of $\be$, denoted $\Ab(\g)$, attracted much attention after appearance of \cite{ko98},
where Kostant popularised (and elaborated on) a remarkable result of D.\,Peterson
to the effect that $\#\Ab(\g)=2^{\rk\g}$.
The aim of this note is to report on a surprising connection between $\Ab(\g)$ and the subsets of the Dynkin diagram of $\g$.
Namely, comparing independently performed computations \cite{djok-et,coveri}, we notice that
the number of abelian ideals with $k$ generators equals the number of 
subsets of the Dynkin diagram with $k$ connected components 
(see details in Section~\ref{sect:odin}).
For $\g$ of type $\GR{A}{n}$ or  $\GR{C}{n}$, we provide a combinatorial explanation of this coincidence by constructing a suitable bijection between $\Ab(\g)$ and the subsets of
the Dynkin diagram  (see Sect.~\ref{sect:A_n}).
In Section~\ref{sect:bij}, we construct a general bijection between $\Ab(\g)$ and the 
subsets of the Dynkin diagram. Although this last bijection does not respect
the number of generators and connected components, we believe it is interesting in
its own right. This exploits a relationship between the abelian ideals and certain elements of 
the affine Weyl group of $\g$ \cite{ko98}. 

We refer to \cite{hump} for standard results on root systems and affine Weyl groups.

{\small
{\bf Acknowledgements.} This work was done during my stay at 
the Max-Planck-Institut f\"ur Mathematik (Bonn). I would like to thank the Institute for
the hospitality and inspiring environment. 
}

\section{An empirical observation}
\label{sect:odin}

\noindent
Let $\Delta$ be the root system of $\g$ and  $\Delta^+$ the subset of
positive roots corresponding to $\be$. Then $\Pi=\{\ap_1,\dots,\ap_n\}$ is the set of simple 
roots in $\Delta^+$. We regard $\Delta^+$ as poset with respect to the  {\it root order}.
This means that $\nu\curle\mu$ if $\mu-\nu$ is a non-negative integral linear combination
of simple roots.

An  ideal $\ah$ of $\be$ is said to be {\it abelian}, if $[\ah,\ah]=0$.
Then $\ah$ is a sum of certain root spaces in $\ut=[\be,\be]$,  i.e.,
$\ah=\bigoplus_{\gamma\in I}\g_\gamma$.  
Here $I$ is necessarily an {\it upper ideal\/} of $\Delta^+$, i.e., if $\nu\in I, \mu\in\Delta^+$,
and $\nu+\mu\in \Delta^+$, then $\nu+\mu\in I$. In other words, if $\nu\in I$ and $\nu\curle
\gamma$, then $\gamma\in I$.
The property of being abelian means that
$\gamma'+\gamma''\not\in \Delta^+$ for all $\gamma',\gamma''\in I$.
Let $\Ab=\Ab(\g)$ be the poset, with respect to inclusion, of  all abelian ideals.
We will mostly work in the combinatorial setting, so that an abelian ideal $\ah$
is identified with
the corresponding set $I$ of positive roots. The minimal elements (roots) of $I$
are also called the {\it generators} of $I$. 

Let $\kappa(I)$ be the number of minimal elements of $I$.  
The generating function
\[
   \cku_\Ab(q):=\sum_{I\in\Ab} q^{\kappa(I)}
\]
is called the {\it upper covering polynomial} (of the poset $\Ab$). We refer to
\cite{coveri} for generalities on covering polynomials. In fact, there is also a {\it lower
covering polynomial}, which is not considered here.
The polynomials $\cku_{\Ab(\g)}(q)$ are known for all simple Lie algebras $\g$,
see \cite[Section\,5]{rodstv}  and \cite[Section\,5]{coveri}.
By the very definition, the coefficient of $q^k$ is the number of abelian ideals with 
$k$ generators. 

Recently, we have discovered that  the polynomials $\cku_{\Ab(\g)}(q)$ had  
another interpretation in terms of the Dynkin diagram of $\Delta$.
Regarding $\Pi$ as the set of nodes in the Dynkin diagram, we say that a
subset of  $\Pi$ is {\it connected\/} if it is connected in the Dynkin diagram.
Then, for any subset of $\Pi$, we can consider the number of its connected components.
Let $N_k=N_k(\Delta)$ denote the number of subsets of $\Pi$ with exactly $k$ connected
components. Then, of course, $\sum_{k\ge 0} N_k=2^n$.
For all $\Delta$, the numbers $N_k$ are found in \cite[n$^o$\,2]{djok-et}.\footnote{ 
More precisely, there is only a recursive formula for $N_k(\GR{D}{n})$ in \cite{djok-et}.
But it is equivalent to the recursive formula for polynomials $\cku_{\Ab}(q)$, cf. 
\cite[Eq. (5.1)]{coveri} and \eqref{recurs}  below. }
Comparing them with our upper covering polynomials, we get
the striking assertion:   

\begin{thm}   \label{nabl-empirical}
For any reduced irreducible root system $\Delta$ (i.e., for any simple Lie algebra $\g$), 
we have
\[
     \sum_{k\ge 0} N_k q^k =\cku_{\Ab(\g)}(q) .
\]
In other words, the number of abelian ideals with $k$ generators equals the number 
of subsets of\/ $\Pi$ with $k$ connected components.
\end{thm}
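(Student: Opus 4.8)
The plan is to treat this as a verification against already-known data, rather than (at this stage) to seek a uniform bijection. Both sides of the asserted identity are available in the literature for every irreducible type: the upper covering polynomials $\cku_{\Ab(\g)}(q)$ are listed in \cite[Section\,5]{rodstv} and \cite[Section\,5]{coveri}, and the component-counting numbers $N_k(\Delta)$ are given in \cite[n$^o$\,2]{djok-et}. So the statement splits into one check per Cartan type, and the actual work is to organise these checks and to handle the one type, $\GR{D}{n}$, for which the diagram data is only given recursively.

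For the types whose Dynkin diagram has a path as underlying graph — namely $\GR{A}{n}$, $\GR{B}{n}$, $\GR{C}{n}$, $\GR{F}{4}$ and $\GR{G}{2}$ — I would first record one closed formula for the diagram side. A subset of a path on $m$ nodes with exactly $k$ connected components is a choice of $k$ pairwise non-adjacent, non-empty runs of consecutive nodes; a routine gaps-and-blocks count gives $N_k=\binom{m+1}{2k}$, hence $\sum_{k\ge 0}N_k q^k=\sum_{k\ge 0}\binom{m+1}{2k}q^k$ with $m$ the number of nodes. It then remains to compare this polynomial with the tabulated $\cku_{\Ab(\slno)}(q)$, $\cku_{\Ab(\sono)}(q)$, $\cku_{\Ab(\spn)}(q)$ and the two finite instances for $\GR{F}{4}$ and $\GR{G}{2}$. (For $\GR{A}{n}$ and $\GR{C}{n}$ this comparison is in any case subsumed by, and far better explained by, the explicit bijection of Section~\ref{sect:A_n}.)

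The remaining types are $\GR{E}{6}$, $\GR{E}{7}$, $\GR{E}{8}$ and $\GR{D}{n}$. The three $\GR{E}{}$-cases are single finite comparisons of explicitly listed polynomials. For $\GR{D}{n}$, where \cite{djok-et} supplies only a recursion for $N_k(\GR{D}{n})$, the plan is an induction on $n$: delete the branch node of the $\GR{D}{n}$ diagram to express $\sum_{k}N_k(\GR{D}{n})q^k$ through the generating functions of smaller $\GR{D}{}$- and $\GR{A}{}$-type (path) diagrams; perform the parallel reduction of $\cku_{\Ab(\sone)}(q)$ via the recursion \eqref{recurs}; observe that, after substituting the already-established $\GR{A}{n}$ identity, the two recursions coincide; and verify the base case(s). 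This equivalence of recursions is exactly what is flagged in the footnote to the theorem.

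The genuine obstacle is not any single computation but the fact that such an argument is entirely non-uniform: it matches tables type by type and explains nothing about \emph{why} $\kappa(I)$ should correspond to the number of connected components of a subset of $\Pi$. What one really wants is a type-independent, $\kappa$-preserving bijection $\Ab(\g)\to\{\text{subsets of }\Pi\}$; as the introduction indicates, this is produced combinatorially only for $\GR{A}{n}$ and $\GR{C}{n}$ in Section~\ref{sect:A_n}, while the general bijection of Section~\ref{sect:bij} does not send $\kappa$ to the component count. Constructing a uniform statistic-preserving bijection is the part I would not expect to carry through, and it seems to be the main question left open by Theorem~\ref{nabl-empirical}.
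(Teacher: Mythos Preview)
Your proposal is correct and matches the paper's own approach: the theorem is presented there as an empirical observation obtained by comparing the tabulated polynomials $\cku_{\Ab(\g)}(q)$ from \cite{rodstv,coveri} with the numbers $N_k(\Delta)$ from \cite{djok-et}, with the $\GR{D}{n}$ case handled exactly as you suggest, by noting that the recursion for $N_k(\GR{D}{n})$ in \cite{djok-et} coincides with the recursion \eqref{recurs} for the covering polynomials. Your explicit derivation of $N_k=\binom{m+1}{2k}$ for path diagrams is a small addition, but the overall strategy is the same case-by-case verification, and your closing remarks about the absence of a uniform statistic-preserving bijection accurately reflect what the paper leaves open.
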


\noindent
Actually, one of the goals of \cite{djok-et} is to classify the closed subsets $P$ of
$\Delta$ such that $\Delta\setminus P$ is also closed. Such a $P$ is said to be 
{\it invertible}.  If $P$ is invertible, then so is $w(P)$ for any $w\in W$. 
Let $N(\Delta)$ be the number of $W$-orbits in the set of all invertible subsets of $\Delta$.
It is shown in \cite[Eq.\,(2)]{djok-et} that
\[
   N(\Delta)=\sum_{k\ge 0} N_k 2^k .
\] 
In this way, one obtains a surprising interpretation of the value $\cku_{\Ab(\g)}(2)$.
For the reader's convenience, we reproduce a table with all these polynomials.

\begin{table}[htb]     
\caption{ The upper covering 
polynomials for $\Ab(\g)$}  \label{table:ab}
\begin{tabular}{c|l}
\phantom{q}$\g$ & \phantom{quq}$\cku_{\Ab(\g)}(q)$  
\\ \hline
$\GR{A}{n}$, $\GR{B}{n}$, $\GR{C}{n}$& $\underset{k\ge 0}{\sum}\genfrac{(}{)}{0pt}{}{n+1}{2k}q^k$ 
{\rule{0pt}{2.8ex}} 
\\
$\GR{D}{n}$ & 
$\underset{k\ge 0}{\sum}\bigl(\genfrac{(}{)}{0pt}{}{n+2}{2k}-
4\genfrac{(}{)}{0pt}{}{n-1}{2k-2}\bigr )q^k$ {\rule{0pt}{3ex}}  
\\
$\GR{E}{6}$ & $1{+}25q{+}\phantom{1} 27q^2{+}11q^3$  {\rule{0pt}{2.4ex}}  
\\
$\GR{E}{7}$ & $1{+}34q{+}\phantom{1} 60q^2{+}30q^3{+}\phantom{1} 3q^4$ 
{\rule{0pt}{2.3ex}}  
\\
$\GR{E}{8}$ & $1{+}44q{+}118q^2{+}76q^3{+}17q^4$ {\rule{0pt}{2.3ex}}   
\\
$\GR{F}{4}$ & $1{+}10q{+}\phantom{11} 5q^2$  
\\
$\GR{G}{2}$ & 
{$1{+}\phantom{1} 3q$}    
\\ \hline
\end{tabular} 
\end{table}%

\noindent
In \cite[Section~5]{coveri}, we observed that if the Dynkin diagram has no branching 
nodes, then
$\cku_{\Ab(\g)}$ depends only on $\rk(\g)$, i.e., on the number of nodes. For instance,
the upper covering polynomial for $\GR{F}{4}$ 
(resp. $\GR{G}{2}$) is equal
to that for $\GR{A}{4}$ (resp. $\GR{A}{2}$).
Having at hand Theorem~\ref{nabl-empirical}, we now realise that the reason is that 
the  connected components of a subset of $\Pi$
does not depend on the length of simple roots. 
 
\noindent
There are some regularities in Table~\ref{table:ab}.
For all classical series, these polynomials  satisfy the recurrence relation 
\begin{equation}          \label{recurs}
     \cku_{\Ab(\GR{X}{n})}(q)=2\cku_{\Ab(\GR{X}{n-1})}(q)+(q-1)\cku_{\Ab(\GR{X}{n-2})}(q)\ , 
\end{equation}
where $\GR{X}{}\in \{\mbox{\bf A,B,C,D}\}$. 
Furthermore, the sequence $\GR{E}{3}=\GR{A}{2}\times \GR{A}{1}$, 
$\GR{E}{4}=\GR{A}{4},\GR{E}{5}=\GR{D}{5}$, $\GR{E}{6}$, $\GR{E}{7},\GR{E}{8}$
can be regarded as the `exceptional' series, and for this series the same
recurrence relation holds. Comparing the coefficients of $q^k$ in \eqref{recurs},
one obtains the relation
\beq   \label{eq:recurs2}
    N_k(\GR{X}{n})=2N_k(\GR{X}{n-1})+N_{k-1}(\GR{X}{n-2})-N_k(\GR{X}{n-2}) .
\eeq
That is, it is true not only for $\GR{D}{n}$, as pointed out in \cite[p.\,341]{djok-et}, but for all
our series, including the exceptional one. Actually, relation \eqref{eq:recurs2} for the 
number of subsets with prescribed number of connected components
remains true if we extend any finite
graph $\eus G_{n-2}$ with a chain of length 2, see the pattern below:

\begin{figure}[htbp]
\begin{picture}(200,30)(-20,-5)
\multiput(30,12)(20,0){2}{\circle{6}}
\multiput(33,12)(20,0){2}{\line(1,0){14}}
\put(70,10){\framebox{$\eus G_{n-2}$}}
\put(-10,9){$\eus G_n$:}
\put(45,0){$\underbrace%
{\mbox{\hspace{60\unitlength}}}_{\eus G_{n-1}}$}
\end{picture}
\end{figure}

\noindent
We leave it to the reader to prove \eqref{eq:recurs2} for $\GR{X}{n}=\eus G_n$.

\begin{rmk}
For a sequence of polynomials $\ck_n(q)$ satisfying relation \eqref{recurs}, we have
$\ck_n(-1)=2\ck_{n-1}(-1)-2\ck_{n-2}(-1)$. This yields a kind of
$4$-periodicity for the values at $q=-1$: \ 
$\ck_{n+2}(-1)=-4 \ck_{n-2}(-1)$.
\end{rmk}

\section{A good bijection for $\GR{A}{n}$ and $\GR{C}{n}$}
\label{sect:A_n}

\noindent
In what follows, we write $2^\Pi$ for the set of all subsets
of $\Pi$. Theorem~\ref{nabl-empirical}  suggests that there could 
be a natural one-to-one correspondence between $\Ab(\g)$ and $2^\Pi$, under which 
the ideals with $k$ generators correspond to the subsets with
$k$ connected components.  We call it a {\it good bijection}.
So far, we did not succeed in finding such a good bijection in general. 
In fact, we are able to construct a general  bijection 
$\Ab(\g)\stackrel{1:1}\longrightarrow 2^\Pi$ (see Section~\ref{sect:bij}), 
but that bijection is not good. 

In this section,  a good bijection is constructed
for $\g=\slno$ or $\spn$.

Let $\Pi=\{\ap_i=\esi_i-\esi_{i+1} \mid i=1,2,\dots,n\}$,
be the standard set of simple roots for $\GR{A}{n}$. 
We regard $\Pi$ as the $n$-element interval: $[n]:=\{1,2,\dots,n\}$.
Every positive root $\gamma$ of $\GR{A}{n}$  is of the form 
$\gamma=\ap_i+\ap_{i+1}+ \dots +\ap_j$ with
$i\le j$,
and therefore we identify it with the subset ({\it interval}) $[i,j]:=\{i,i+1,\dots,j\}$ of
$[n]$. In the usual terminology on root systems, 
$[i,j]$ is the {\it support\/} of $\gamma$, also denoted $\textrm{supp}(\gamma)$.

Now, let $I$ be an abelian ideal in $\Delta^+(\GR{A}{n})$ and 
$\gamma_1,\dots, \gamma_k$ the set of generators of $I$. 
Then such an ideal is also denoted by $I(\gamma_1,\dots,\gamma_k)$.
(Of course, this imposes certain restrictions on $\gamma_i$'s, which we describe
below.)

Let  $\Phi: \Ab(\slno) \to \{\text{subsets of $[n]$}\}=:2^{[n]}$ be defined 
by the formula:
\[
     I=I(\gamma_1,\dots,\gamma_k) \mapsto \textrm{supp}(\gamma_1)\oplus \dots
    \oplus \textrm{supp}(\gamma_k) ,
\]
where `$\oplus$' stands for the ``exclusive disjunction'' (or ``addition mod 2'') in the Boolean 
algebra of subsets of  $[n]$.
 
\begin{thm}   \label{thm:An}
The map $\Phi$ sets up a one-to-one correspondence between $\Ab(\slno)$ and 
$2^{[n]}$. Moreover, if $I$ has $k$ generators, then $\Phi(I)$ has $k$ connected  components.
\end{thm}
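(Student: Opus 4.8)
The plan is to make the combinatorics of abelian ideals in type $\GR{A}{n}$ fully explicit and then read off both the bijectivity of $\Phi$ and the statement about generators versus connected components. First I would recall the standard description of upper ideals of $\Delta^+(\GR{A}{n})$ as ``staircase'' regions: identifying a positive root with its support interval $[i,j]\subseteq[n]$, the root order is $[i,j]\curle[i',j']$ iff $i'\le i$ and $j\le j'$, so an upper ideal is a union of intervals closed under enlargement. Its generators $\gamma_1,\dots,\gamma_k$ are the minimal intervals, and after ordering them we have $\textrm{supp}(\gamma_1)=[i_1,j_1],\dots,\textrm{supp}(\gamma_k)=[i_k,j_k]$ with $i_1<i_2<\dots<i_k$ and $j_1<j_2<\dots<j_k$ (strict, since no generator contains another). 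The next step is to impose the abelian condition. Since $\gamma'+\gamma''\in\Delta^+$ for roots of $\GR{A}{n}$ precisely when the supports $[a,b]$ and $[c,d]$ satisfy $c=b+1$ or $a=d+1$ (concatenation), the ideal $I(\gamma_1,\dots,\gamma_k)$ is abelian iff no two elements of $I$ have abutting supports; translated to the generators, this becomes the condition that consecutive generators overlap but do not merely touch, i.e. $i_{r+1}\le j_r$ for each $r=1,\dots,k-1$ (and one checks this also controls all non-generator pairs, because enlarging generators only increases overlaps). So $\Ab(\slno)$ is in explicit bijection with the set of sequences $[i_1,j_1],\dots,[i_k,j_k]$ of intervals in $[n]$ with $i_1<\dots<i_k$, $j_1<\dots<j_k$, and $i_{r+1}\le j_r$.

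With this description in hand, I would compute $\Phi(I)=\textrm{supp}(\gamma_1)\oplus\dots\oplus\textrm{supp}(\gamma_k)$ directly. Because of the interleaving $i_1<i_2\le j_1<j_2\le\dots$, the symmetric difference telescopes into a union of $k$ pairwise non-adjacent intervals: concretely $\Phi(I)=[i_1,i_2-1]\cup[j_1+1,i_3-1]\cup\dots\cup[j_{k-1}+1,j_k]$ — each block obtained from two consecutive ``left endpoints minus one'' or ``right endpoint plus one'' markers — and the gaps between successive blocks are exactly the overlap regions $[i_{r+1},j_r]$, which are nonempty by the abelian condition. Hence $\Phi(I)$ is a subset of $[n]$ whose connected components (in the path Dynkin diagram, i.e. maximal runs of consecutive integers) number exactly $k$. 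This proves the ``moreover'' clause, and it simultaneously shows how to invert $\Phi$: given any nonempty subset $S\subseteq[n]$ with components $[a_1,b_1],\dots,[a_k,b_k]$ listed left to right (so $b_r+1<a_{r+1}$), set $i_1=a_1$, $j_k=b_k$, and recover the remaining endpoints from $i_{r+1}=b_r+1$ and $j_r=a_{r+1}-1$; one checks the resulting intervals satisfy $i_1<\dots<i_k$, $j_1<\dots<j_k$, $i_{r+1}=b_r+1\le a_{r+1}-1=j_r$ wait — here $b_r+1\le a_{r+1}-1$ follows from $b_r+1<a_{r+1}$ — so they form a legitimate abelian ideal, and $\Phi$ of it returns $S$. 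The empty subset corresponds to the empty ideal ($k=0$). This two-sided inverse establishes the bijection.

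The main obstacle, and the step deserving the most care, is the precise translation of the abelian condition into a condition purely on the generators, and the verification that this condition is \emph{equivalent} to ``consecutive component-intervals of $\Phi(I)$ are separated,'' rather than merely implied by it. One has to be careful that abutting supports can occur between non-generator elements of $I$ even when the generators themselves are fine, so I would argue that if $\gamma_r$ and $\gamma_{r+1}$ overlap (strict inequality $i_{r+1}\le j_r$) then every element of $I$ lying above $\gamma_r$ and every element above $\gamma_{r+1}$ also overlap or one contains the other, ruling out adjacency; and conversely if some pair in $I$ is adjacent, pushing down to generators forces $i_{r+1}=j_r+1$ for some $r$. Once this lemma is nailed down the rest is the bookkeeping sketched above. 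I would also remark that the $\spn$ case, promised in the theorem statement, follows either by the same style of argument using the folding $\GR{A}{2n-1}\!\to\!\GR{C}{n}$ (abelian ideals and the relevant supports are compatible with the involution) or by a direct parallel computation; the details are routine given the $\GR{A}{n}$ analysis, so I would present $\GR{A}{n}$ in full and indicate the modifications for $\GR{C}{n}$.
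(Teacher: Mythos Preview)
Your overall plan matches the paper's, but your characterization of abelian ideals in type $\GR{A}{n}$ is wrong, and the error propagates through the rest. You claim that intervals $[i_1,j_1],\dots,[i_k,j_k]$ (with $i_1<\dots<i_k$ and $j_1<\dots<j_k$) generate an \emph{abelian} ideal iff consecutive generators overlap, i.e.\ $i_{r+1}\le j_r$. This is too weak: for $k=3$ take $[1,3],[2,5],[4,6]$; then $i_2=2\le j_1=3$ and $i_3=4\le j_2=5$, yet the generators $[1,3]$ and $[4,6]$ are already adjacent, so $[1,3]+[4,6]=[1,6]\in\Delta^+$ and the ideal is not abelian. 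The correct condition, as in the paper, is $i_1<\dots<i_k\le j_1<\dots<j_k$: \emph{all} left endpoints precede \emph{all} right endpoints, forcing every pair of generators (not just consecutive ones) to overlap. Your parenthetical justification ``enlarging generators only increases overlaps'' fails exactly here, since non-consecutive generators need not overlap at all under your weaker hypothesis.

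This then breaks both your formula for $\Phi(I)$ and your inverse. Under the true constraint $i_k\le j_1$, the interval $[j_1+1,i_3-1]$ in your expression for $\Phi(I)$ is empty; the symmetric difference actually follows an interlacing pattern: alternate $i$-blocks $[i_1,i_2-1],[i_3,i_4-1],\dots$ on the left, alternate $j$-blocks $\dots,[j_{k-3}+1,j_{k-2}],[j_{k-1}+1,j_k]$ on the right, and for odd $k$ the middle block $[i_k,j_1]$. Likewise your proposed inverse, applied to $S=\{1,3,5\}\subset[5]$, returns the generators $[1,2],[2,4],[4,5]$, whose ideal contains $[1,2]$ and $[3,5]\supseteq[4,5]$ and is therefore not abelian; the genuine preimage is $[1,3],[2,4],[3,5]$. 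Once you replace $i_{r+1}\le j_r$ by $i_k\le j_1$ and recompute the symmetric difference accordingly, the argument goes through as in the paper.
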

\begin{proof}
Suppose we are given $k$ positive roots $\gamma_s=[i_s,j_s]$, $s=1,\dots,k$.
Without loss of generality, we can assume that $i_1\le i_2\le\dots\le i_k$.
It is then easily seen that $\{\gamma_1,\dots,\gamma_k\}$ is the set of generators of an
abelian ideal if and only if 
\beq    \label{eq:generators}
    1\le i_1 <i_2< \dots <i_k\le j_1<j_2< \dots < j_k\le n . 
\eeq
(This presentation also shows that $\#\Ab(\slno)=2^n$.)
Thus, we have $2k$ points in the whole interval $[n]$ and  $2k-1$ intervals between
them.
Now, a  straightforward verification shows that 
    $[i_1,j_1]\oplus \dots \oplus [i_k,j_k]$ is the  union of the following disjoint intervals:
\begin{itemize}    
\item we begin with the interlacing $i$-intervals: $[i_1,i_2-1],\,[i_3,i_4-1],\dots$;
\item we end up with the interlacing $j$-intervals: 
          $\dots,  [j_{k-3}+1,j_{k-2}],\,[j_{k-1}+1,j_k]$; 
\item  if $k$ is odd, then we also take
         the middle interval $[i_k,j_1]$.
\end{itemize}
The total number of such intervals equals $k$, as required.

Conversely, any collection of $k$ disjoint intervals allows us to write up a sequence of the
form \eqref{eq:generators} and obtain an abelian ideal.
\end{proof}

\begin{exa}
For $k=3$, we obtain the intervals  $[i_1,i_2-1],\,[i_3,j_1],\, [j_2+1,j_3]$.\\
For $k=4$, we obtain the intervals  $[i_1,i_2-1],\,[i_3,i_4-1],\, [j_1+1,j_2],\,[j_3+1,j_4]$.
\end{exa}

To construct a good bijection for $\g=\spn$, 
we use the usual unfolding $\GR{C}{n}\leadsto \GR{A}{2n-1}$ (see figure below) and 
combine it with the above $\mathfrak{sl}$-algorithm.

\begin{figure}[htb]
\begin{picture}(300,30)(20,0)
\multiput(30,12)(20,0){2}{\circle{6}}
\multiput(110,12)(20,0){2}{\circle{6}}
\multiput(112.5,11)(0,2){2}{\line(1,0){15}}
\multiput(33,12)(20,0){2}{\line(1,0){14}}
\put(93,12){\line(1,0){14}}
\put(74,9){$\cdots$}
\put(115,9){$<$} 
\put(145,9){$\leadsto$}

\multiput(170,1)(20,0){2}{\circle{6}}
\multiput(173,1)(20,0){2}{\line(1,0){14}}
\put(214,-2){$\cdots$}
\multiput(170,23)(20,0){2}{\circle{6}}
\multiput(173,23)(20,0){2}{\line(1,0){14}}
\put(214,20){$\cdots$}
\multiput(250,1)(0,22){2}{\circle{6}}
\multiput(233,1)(0,22){2}{\line(1,0){14}}
\put(270,12){\circle{6}}
\put(252.5,2){\line(2,1){14.2}}
\put(252.5,22){\line(2,-1){14.2}}

\end{picture}
\end{figure}

\noindent
If $\Delta$ is of type $\GR{C}{n}$, then $\Pi=\{\esi_1-\esi_2,\dots,\esi_{n-1}-\esi_n, 2\esi_n\}$ and the unique maximal abelian ideal consists of the roots $\{\esi_i+\esi_j\mid 1\le i\le j\le n\}$.
The positive roots of $\GR{A}{2n-1}$ are identified with the intervals of $[2n-1]$, as above.
Under the above unfolding,  a short root $\esi_i+\esi_j$ ($i\ne j$) is repalced with two
roots $[i,2n-j]$ and $[j,2n-i]$ of $\GR{A}{2n-1}$; and a long root $2\esi_i$ is replaced with
one root $[i,2n-i]$. 

If $I=I(\gamma_1,\dots,\gamma_k)$ is an abelian ideal of $\Delta^+(\GR{C}{n})$, 
then we do the following:
\begin{itemize}
\item[\sf (i)] \  Replace each $\gamma_i$ with one or two roots (intervals) for $\GR{A}{2n-1}$,
as explained.
\item[\sf (ii)] \  Take the sum modulo 2 of all these intervals. Obviously, the resulting subset 
of  $[2n-1]$, $\tilde \Phi(I)$, is symmetric with respect to the middle point $\{n\}$. 
\item[\sf (iii)] \  Take the quotient of $\tilde \Phi(I)$ by this symmetry, i.e., consider
$\Phi(I):=\tilde \Phi(I)\cap [n]$.
\end{itemize}

\noindent
In this way, we obtain a mapping $\Phi:\Ab(\spn)\to 2^{[n]}$, 
and it is not hard to verify that it is a good
bijection.

\section{A general bijection}
\label{sect:bij}

\noindent
In this section, a general bijection between $\Ab(\g)$ and  $2^{\Pi}$ is constructed.
To this end, we need a parametrisation of the abelian ideals described by Kostant \cite{ko98}, which  relies on the relationship, due to D.\,Peterson, between the abelian ideals and the so-called {\it minuscule elements\/} of the affine Weyl group of $\Delta$.
Recall the necessary setup.

We have the real vector space $V=\oplus_{i=1}^n{\mathbb R}\ap_i$, 
the usual Weyl group generated by the reflections
$s_1,\dots,s_n$,  and a $W$-invariant inner product $(\ ,\ )$ on $V$. Then

$Q=\oplus _{i=1}^n {\mathbb Z}\ap_i \subset V$ is the root lattice; 

$Q^+=\{\sum_{i=1}^n m_i\ap_i \mid m_i=0,1,2,\dots \}$ is the monoid generated by the 
positive roots.

\noindent
As usual, $\mu^\vee=2\mu/(\mu,\mu)$ is the coroot
for $\mu\in \Delta$ and $Q^\vee=\oplus _{i=1}^n {\mathbb Z}\ap_i^\vee$  
is the coroot lattice. 
\\[.6ex]
Letting $\widehat V=V\oplus {\mathbb R}\delta\oplus {\mathbb R}\lb$, we extend
the inner product $(\ ,\ )$ on $\widehat V$ so that $(\delta,V)=(\lb,V)=
(\delta,\delta)= (\lb,\lb)=0$ and $(\delta,\lb)=1$. Set  $\ap_0=\delta-\theta$.
Then 
\begin{itemize}
\item[] \ 
$\widehat\Delta=\{\Delta+k\delta \mid k\in {\mathbb Z}\}$ is the set of affine
(real) roots; 
\item[] \ $\HD^+= \Delta^+ \cup \{ \Delta +k\delta \mid k\ge 1\}$ is
the set of positive affine roots; 
\item[] \ $\HP=\Pi\cup\{\ap_0\}$ is the corresponding set
of affine simple roots. 
\end{itemize}
For each $\ap_i\in \HP$, let $s_i$ denote the corresponding reflection in $GL(\HV)$.
That is, $s_i(x)=x- (x,\ap_i)\ap_i^\vee$ for any $x\in \HV$.
The affine Weyl group, $\HW$, is the subgroup of $GL(\HV)$
generated by the reflections $s_0,s_1,\dots,s_n$.
The inner product $(\ ,\ )$ on $\widehat V$ is $\widehat W$-invariant. 
For $w\in\HW$, we set $\eus N(w)=\{\nu\in\HD^+\mid w(\nu)\in -\HD^+\}$.

Following D.\,Peterson, we say that $w\in \HW$ is  {\it minuscule\/}, if 
$\eus N(w)$ is of the form $\{\delta-\gamma\mid \gamma\in I_w\}$ 
for some subset $I_w\subset \Delta$.
It is not hard to prove that {\sf (i)} $I_w\subset \Delta^+$, {\sf (ii)} $I$ is an abelian ideal, and
{\sf (iii)} the assignment 
$w\mapsto I_w$ yields a bijection between the minuscule elements of
$\HW$ and the abelian ideals, see \cite{ko98},  \cite[Prop.\,2.8]{cp1}. 
Conversely, if $I\in\Ab$, then $w_I$ stands for the corresponding minuscule
element of $\HW$. 

{\bf (I)} \ Our first step is to assign an element of $Q^\vee$ to an abelian ideal. 
(This is known and,
moreover, such an assignment  can be performed for any \adn ideal of $\be$ \cite{cp2}.)
In fact, we first associate an element of $Q^\vee$ to any $w\in\HW$.

\noindent
Recall that $\HW$ is a semi-direct product of $W$ and $Q^\vee$, and it can be regarded as
a group of affine-linear transformations of $V$ \cite{hump}.
For any $w\in \HW$, we have a unique decomposition
\[
   w=v{\cdot}t_r,
\]
where $v\in W$ and $t_r$ is the translation of $V$ corresponding to $r\in Q^\vee$, i.e., 
$t_r(x)=x+r$ for all $x\in V$.
Now, we  assign an element of $Q^\vee$ to any $w\in\HW$ as follows:
\[
      w\mapsto v(r) .
\]
An alternative way for doing so (which does not appeal to the semi-direct product
structure) is the following. 
Define the integers $k_i$, $i=1,\dots,n$,  by the rule
$w^{-1}(\ap_i)=\mu_i+k_i\delta$ ($\mu_i\in \Delta$).
Then there is a unique element $z\in Q^\vee$ such that $(z,\ap_i)=k_i$.
It is easily seen that these two approaches, via the linear action on $\HV$ or the affine-linear 
action on $V$, are equivalent, i.e., $(\ap_i,v(r))=k_i$. 
If $w=w_I$ is minuscule, then we also write $z_I$ for the resulting element of $Q^\vee$. 
It is shown in \cite[Sect\,2]{ko98} that  the mapping
$I \mapsto z_I\in V$ sets up a bijection between $\Ab(\g)$
and   $\eus Z_1=\{ z\in Q^\vee \mid (z,\gamma)\in \{-1,0,1,2\}  \quad \forall \gamma\in \Delta^+\}$. 

{\bf (II)} \ Having constructed $z_I\in Q^\vee$, we write
\[
    z_I=\sum_{i=1}^n  m_i \ap_i^\vee, \quad  m_i\in \mathbb Z .
\] 
Finally, we define the subset $S_I$ of $\Pi$ as follows:
$S_I=\{ \ap_i\in \Pi \mid m_i \text{ is odd }\}$. In other words, the
Boolean vector   $(m_1\dots m_n) \pmod 2$ is the characteristic vector of $S_I$.

\begin{thm}  \label{thm:nat-bij}
The map $(I\in \Ab(\g)) \mapsto (S_I\subset \Pi)$ sets up a one-to-one correspondence  
between $\Ab(\g)$ and $2^\Pi$.
\end{thm}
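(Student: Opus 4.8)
The strategy is to count both sides and then exhibit an injective (hence bijective) map. We already know $\#\Ab(\g) = 2^n = \#2^\Pi$, so it suffices to prove that $I\mapsto S_I$ is injective. The key input is the bijection $I\mapsto z_I$ between $\Ab(\g)$ and the set $\eus Z_1 = \{z\in Q^\vee \mid (z,\gamma)\in\{-1,0,1,2\}\ \forall\gamma\in\Delta^+\}$ from Kostant's work, recalled in step \textbf{(I)}. Thus everything reduces to the following purely lattice-theoretic claim: the reduction map $\eus Z_1 \to Q^\vee/2Q^\vee$, $z\mapsto z \bmod 2Q^\vee$, is a bijection. Since $\#\eus Z_1 = 2^n = \#(Q^\vee/2Q^\vee)$, injectivity is again enough: if $z, z' \in \eus Z_1$ with $z \equiv z' \pmod{2Q^\vee}$, we must show $z = z'$.

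\textbf{The core argument.} Set $y = z - z' \in 2Q^\vee$, so $y = 2u$ for some $u\in Q^\vee$, and we want $y = 0$. For every $\gamma\in\Delta^+$ we have $(z,\gamma), (z',\gamma)\in\{-1,0,1,2\}$, hence $(y,\gamma) = (z,\gamma)-(z',\gamma)\in\{-3,-2,\dots,3\}$; moreover $(y,\gamma) = 2(u,\gamma)$ is even, so $(y,\gamma)\in\{-2,0,2\}$, i.e. $(u,\gamma)\in\{-1,0,1\}$ for all $\gamma\in\Delta^+$, and therefore for all $\gamma\in\Delta$. An element $u\in Q^\vee$ with $(u,\alpha)\in\{-1,0,1\}$ for all roots $\alpha$ — in particular $(u,\alpha^\vee)$ is an integer in $\{-1,0,1\}$ for all $\alpha$ — is a \emph{minuscule coweight}; but a minuscule coweight lying in the coroot lattice $Q^\vee$ must be $0$, since the nonzero minuscule coweights represent the nontrivial cosets of $Q^\vee$ in the coweight lattice $P^\vee$. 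Hence $u = 0$, so $y = 0$ and $z = z'$. Unwinding the two bijections $I\leftrightarrow z_I$ and $z\leftrightarrow z\bmod 2Q^\vee$ gives that $I\mapsto S_I$ is a bijection $\Ab(\g)\to 2^\Pi$.

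\textbf{Where the care is needed.} The main obstacle is the last step: justifying that an element $u\in Q^\vee$ with $(u,\alpha^\vee)\in\{-1,0,1\}$ for all $\alpha\in\Delta$ must vanish. One clean way is to pair with the simple coroots: since $(u,\alpha_i^\vee)\in\{-1,0,1\}$, replacing $u$ by $-u$ if necessary and using the $W$-action one can assume $u$ is dominant, i.e. $(u,\alpha_i^\vee)\in\{0,1\}$ for all $i$; then $u = \sum_{i\in J}\varpi_i^\vee$ for the fundamental coweights $\varpi_i^\vee$ and some $J\subseteq\{1,\dots,n\}$, and the condition $(u,\theta^\vee)\le 1$ (where $\theta$ is the highest root) forces $J$ to be empty or a single node whose coefficient in $\theta^\vee$ is $1$ — that is, $u$ is $0$ or a minuscule coweight. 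Then one invokes the standard fact $Q^\vee\cap\{\text{minuscule coweights}\} = \{0\}$, equivalently that the minuscule coweights are coset representatives for $P^\vee/Q^\vee$. Alternatively, one can avoid the dominance reduction by arguing directly that if $u\in Q^\vee$ satisfies $|(u,\alpha^\vee)|\le 1$ for every root then $(u,u)\le (\varpi_i^\vee, \theta^\vee)\le 1$ forces $u$ to be a root vector of the dual system of squared length... but the dominant-coweight route is the cleanest and is the one I would write up. A small auxiliary point worth recording explicitly is that the Kostant parametrisation $I\mapsto z_I$, as recalled in the excerpt, lands in $\eus Z_1$ and is a bijection onto it, so that no information is lost in passing from $I$ to $z_I$; with that in hand the proof is exactly the two-line lattice computation above.
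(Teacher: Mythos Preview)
Your argument is correct and complete in its essentials, but it is a genuinely different proof from the one in the paper. Both proofs reduce to showing that the reduction map $\eus Z_1 \to Q^\vee/2Q^\vee$ is injective (equivalently, that $\eus Z_1/2 \to V/Q^\vee$ is injective), and both then conclude by the cardinality count $\#\eus Z_1 = 2^n$. The paper's route is geometric: it quotes Peterson's result that $\eus D=\{v\in V\mid -1<(v,\gamma)\le 1\ \forall\gamma\in\Delta^+\}$ is a fundamental domain for the translation action of $Q^\vee$ on $V$, observes that $\eus Z_1/2\subset\eus D$ (since $(z/2,\gamma)\in\{-1/2,0,1/2,1\}$), and therefore the quotient map $V\to V/Q^\vee$ is injective on $\eus Z_1/2$; its image is precisely the $2$-torsion of the torus. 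Your route is algebraic: you take the difference $u=(z-z')/2\in Q^\vee$, deduce $(u,\gamma)\in\{-1,0,1\}$ for all roots $\gamma$, recognise this as the condition that $u$ is zero or (after conjugating to the dominant chamber) a minuscule fundamental coweight, and then invoke the classical fact that the nonzero minuscule coweights represent the nontrivial classes of $P^\vee/Q^\vee$, hence cannot lie in $Q^\vee$. The paper's argument is shorter and needs only the fundamental-domain lemma, while yours trades that for the structure theory of minuscule coweights; on the other hand your version makes explicit exactly \emph{why} two distinct elements of $\eus Z_1$ cannot be congruent modulo $2Q^\vee$.

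One cosmetic point to fix before writing it up: you repeatedly pair $u$ with $\alpha_i^\vee$ and $\theta^\vee$ where you should be pairing with $\alpha_i$ and $\theta$. What you actually derived is $(u,\gamma)\in\{-1,0,1\}$ for all $\gamma\in\Delta$; dominance for a coweight means $(u,\alpha_i)\ge 0$; the expansion is $u=\sum_i (u,\alpha_i)\varpi_i^\vee$ via $(\varpi_i^\vee,\alpha_j)=\delta_{ij}$; and the constraint is $(u,\theta)\le 1$, giving $J=\varnothing$ or $|J|=1$ with the coefficient of $\alpha_j$ in $\theta$ equal to $1$. (Since $\theta$ is a long root this is harmless for $\theta$, but for short simple roots $\alpha_i$ the quantities $(u,\alpha_i)$ and $(u,\alpha_i^\vee)$ genuinely differ.) Also, the ``replacing $u$ by $-u$'' is unnecessary: the $W$-action alone already brings any vector into the closed dominant chamber.
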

\begin{proof}
By a result of Peterson (see \cite[Lemma~2.2]{ko98}),
\[
   \eus D=\{v\in V\mid -1< (x,\gamma)\le 1\quad \forall \gamma\in \Delta^+\}
\]
is a fundamental domain for the $Q^\vee$-action on $V$ by translations.
Let $\zeta: V \to V/Q^\vee\simeq (\mathbb S^1)^n$ be the quotient map.
Consider $\eus Z_1/2 \subset \eus D$. 
Clearly, $\eus Z_1/2 \to \zeta(\eus Z_1/2)\subset (\mathbb S^1)^n$ is bijective, and the image consists of all elements of order $2$.
Equivalently,  all  the subsets $S_I$ ($I\in\Ab$) are different.
\end{proof}

Unfortunately, this bijection does not behave well with respect to the number of generators
and the number of connected components, see Example~\ref{ex:A3}.

\begin{rmk}   \label{rem:parab-bij}
Let $\mathfrak{Par}(\g)$ be the set of all standard parabolic subalgebras of $\g$.
As is well known, there is a one-to-one correspondence 
\beq    \label{bij-parabolics}
  \mathfrak{Par}(\g) \stackrel{1:1}{\longleftrightarrow} 2^{\Pi} 
\eeq
that assigns to $\p\in \mathfrak{Par}(\g)$ the set of simple roots of the standard Levi subalgebra of $\p$.

On the other hand, there is a natural map $\Psi: \Ab(\g)\to \mathfrak{Par}(\g)$ that takes
an abelian ideal $\ah \subset \be$ to its normaliser in $\g$, denoted $\n_\g(\ah)$.
This map was studied in \cite{pr}, and it was proved there that
$\Psi$ is one-to-one {\sl if and only if\/} $\g$ is of type $\GR{A}{n}$ or $\GR{C}{n}$.
In particular, combining $\Psi$ with \eqref{bij-parabolics},
we obtain the \un{third} natural bijection $\Ab(\GR{A}{n}) \to 2^{\Pi}$.
It is remarkable that all three are different!
\end{rmk}

\begin{ex}     \label{ex:A3}
For $\Delta$ of type $\GR{A}{3}$,
we compare  three bijections given by Theorem~\ref{thm:An}, 
Theorem~\ref{thm:nat-bij},  and Remark~\ref{rem:parab-bij}. 
The first two columns of Table~\ref{table:A3} contain the input: the vector $z_I\in Q^\vee$ 
and the set
of generators of $I$. In the first (resp. second) column, a triple
$m_1m_2m_3$ stands
for $m_1\ap^\vee_1+m_2\ap^\vee_2+m_3\ap^\vee_3$ (resp. 
$m_1\ap_1+m_2\ap_2+m_3\ap_3$).
The third column gives the characteristic vector of $S_I$.
The last column shows the simple roots of the standard Levi subalgebra of $\n_\g(\ah_I)$,
where $\ah_I$ is the ideal of $\be$ corresponding to $I$.
One sees that $z_I\pmod 2$ and $\Phi(I)$ differ in the last two rows, and the last column 
is different from the previous two (even if we take the complement!).

\begin{table}[htb]    
\caption{ Three bijections for $\GR{A}{3}$}  \label{table:A3} 
\begin{tabular}{cc|ccc}
$z_I$ & $\Gamma(I)$ & $z_I\pmod 2$ &  $\Phi(I)$  & Levi of $\n_\g(\ah_I)$ \\ \hline
000 &  $\varnothing$ & 000 &   $\varnothing$ & \{1,2,3\}\\
111  &  $\{ 111 \}$        &  111  &   \{1,2,3\}  &  \{2\}  \\
110  &  $\{ 110 \}$        &  110 &   \{1,2\}     &  \{3\}  \\
011  &  $\{ 011 \}$        &  011 &   \{2,3\}     & \{1\}   \\
100  &  $\{ 100 \}$       &  100  &   \{1\}       & \{2,3\} \\
001  &  $\{ 001 \}$        &  001 &   \{3\}       & \{1,2\} \\
010  &  $\{ 110, 011 \}$ &  010 &  \{1,3\}    & $\varnothing$ \\
121  &  $\{ 010 \}$        &  101 &   \{2\}       &  \{1,3\} \\  
\hline
\end{tabular} 
\end{table}
\end{ex}

\end{document}